\newtheorem{theorem}{Theorem}[section]
\newtheorem{lemma}[theorem]{Lemma}
\theoremstyle{definition}
\newtheorem{definition}[theorem]{Definition}
\theoremstyle{remark}
\newtheorem{remark}[theorem]{Remark}
\theoremstyle{example}
\theoremstyle{property}
\newtheorem{property}[theorem]{Property}
\theoremstyle{question}
\newtheorem{question}[theorem]{Question}
\newcommand{\BE}{{\mathbb{E}}}
\newcommand{\BN}{{\mathbb{N}}}
\newcommand{\CA}{{\mathcal{A}}}
\newcommand{\bae}{\begin{equation}\begin{aligned}}
\newcommand{\eae}{\end{aligned}\end{equation}}
\newcommand{\pr}{\mathbb{P}}
\newcommand{\var}{\textbf{Var}}
\newcommand{\Z}{\mathbb{Z}}
\newcommand{\dd}{\partial}
\newcommand{\note}[1]{{\color{red}{ \bf{ [Note: #1]}}}}
\DeclareMathOperator*{\Cov}{\mathrm{Cov}}
\begin{document}

\numberwithin{equation}{section} %\numberwithin{figure}{section}

% !TEX root = general.tex
%********************************************************************************************************************************
% ********************************** Section I - Introduction and notation ******************************************************
%********************************************************************************************************************************
\title{Domino Tile Placing on Graphs}
%\maketitle
%\chapter{Domino placing}
\author{Jacob J. Kagan}
\begin{abstract}
Given a graph $G$ we consider sequentially placing dimers on it, namely choosing a maximal independent subset of edges, i.e. edges that do not share common vertices. We study the number of vertices that do not belong to any edge found in the maximal set. We prove a CLT result for this model in the case when the underlying graph is $\Z^d$. 
%Most of these results extend naturally to bounded degree graphs. 
\end{abstract}

\maketitle

\section*{Introduction}

Dimer models arise naturally in several problems. Dimer coverings of graphs were extensively considered in the frame of perfect matchings and therefore have considerable interest in various areas. In particular, planar graph dimer coverings were extensively studied, see \cite{kenyon2009lectures} for an overview.

We consider a different dimer-grpah model, instead of considering coverings we consider dimer placements on a graph and we are interested in the fraction of vertices that such a process leaves uncovered. The general problem of placing dimers and monomers on a graph, is known to be notoriously hard, and we are not aware of any previous results in this direction \footnote{A problem in the spirit of our model has been considered in \cite{krapivsky2016kinetics}, there the authors consider a kinetic model of adsorption.}.

The motivation for our model comes from chemistry. It has been experimentally observed that in the reaction of release of halogens from cobalt chains a non-negligible fraction of halogens was not released during heating\footnote{ See \cite{tulchinsky2017reversible} for a detailed account of the chemical aspects. In a private discussion with the first author of \cite{tulchinsky2017reversible}, it seems that the reported number of 20\% was inaccurate and a better estimate is 14.5\%, which is in remarkable agreement with the prediction of the model we consider, especially having in mind its simplicity.}. This suggested a cooperative mechanism of release, which gave rise to the 1 dimensional model we consider in the following section. 

Unlike the more general setting which we consider in later sections the 1-dimensional case can be analysed using generating functions, and it gives some explicit predictions that are given in the work of Tulchinsky et al. \cite{tulchinsky2017reversible}. We begin by reproducing  this result for the sake of completeness. 

%A related problem is studied in \note{ todo - see how to use bib files\cite{krapivsky2016kinetics} }. 

%In the following section we construct a generalization of the model. The starting point for generalizing this model comes from observing that our model has an inherent stationarity property. For the special case when the underlying graph is $\Z^d$ stationarity is enough to imply ergodicity, which we use to establish a central limit theorem for our process. 

%********************************************************************************************************************************
% ********************************** Section I - Introduction and notation ************************************************
%**********************************************************************************************

\section{1-dimensional model}\label{sec:lin}

Consider a row of $n$ sites and a process of placing ``dimers" that is tiles that cover two neighbouring cites with no tile overlaps allowed. We construct the process as follows: at each step we uniformly choose a pair of adjacent sites, if both sites are free we cover them with a tile, otherwise we do nothing. We repeat this process until no more tiles can be placed.

We encode the process by considering $n$ zeros in a row, covering a pair with a tile corresponds to replacing the corresponding zeros with a pair of ones. We consider the number of zeros that remain after there are no more admissible substitutions. This corresponds to calculating the number of holes (we sometimes refer to them as ``monomers") the tilling process leaves.

Consider the first step of the process. Let us denote the pair as $(k+1,k+2)$, thus $k$ may take values $k\in \{0,\ldots, n-2 \}$. This step can be represented by the following picture:

\begin{gather*}\label{eq:rep}
 \underbrace{\overbrace{0\ldots 0}^{k} \cdot 0\cdot 0 \cdot\overbrace{0\ldots 0}^{n-k-2}}_n \\
\\
 \Downarrow\\
\underbrace{\overbrace{0\ldots 0}^{k} \cdot 1\cdot 1 \cdot\overbrace{0\ldots 0}^{n-k-2}}_n
\end{gather*}

Let us introduce a random variables $X_n$ which denotes the number of zeros left at the end of the process in an interval of length $n$ and $Y_1$ which denotes the place of the tile picked at step $1$. The above suggests the following equation:

\begin{equation}
\BE[X_n|Y_1 = k] = \BE[X_k|Y_1 = k] + \BE [X_{n-k-2}|Y_1 = k]
\end{equation}

It is important to note that $X_k$ and $X_{n-k-2}$ are independent and are independent of $Y_1$. We will use this fact extensively in the following analysis.
%-------------------------------------------------------------------------------------------------------------------------
\subsection{An equation for the expectation.}
An interval of $n$ sites contains $n-1$ gaps. Note that each gap corresponds to a possible choice of a dimer (the site on the left and the site from the right). Assuming the dimers are uniformly chosen we can take the expectation of the recurrence equation and obtain:

\begin{align*}
 \BE X_n &= \frac{1}{n-1}\sum_{k = 0}^{n-2}\BE(X_k) + \BE(X_{n-k-2}) =  \frac{2}{n-1}\sum_{k = 0}^{n-2}\BE(X_k)
\end{align*}

Let us denote $e_n =\BE X_n $ and rewrite the above equation
\begin{equation} \label{eq:E[X_n]}
e_n = \frac{2}{n-1}\sum_{k = 0}^{n-2}e_k
\end{equation}
This equation can be simplified, by getting rid of the sum on the right hand side. Note that:
\[ (n+1)\cdot e_{n+2} = 2\sum_{k=0}^{n}e_k \]
\[ n\cdot e_{n+1} = 2\sum_{k=0}^{n-1}e_k \]
taking the difference we obtain

\begin{equation}\label{eq:rec}
(n+1)\cdot e_{n+2} -n\cdot e_{n+1}= 2\cdot e_{n}
\end{equation}
with the initial conditions $e_0 = 0$ (and $e_1 = 1$)

\subsection{solving the recurrence}
Let us introduce the generating function

\[A(x) = \sum_{n=0}^\infty e_n\cdot x^n\]
we can obtain a differential equation for $A(x)$ from the difference equation (\ref{eq:rec}). We do this by first multiplying our equation by $x^n$

\[(n+1)e_{n+2}x^{n} -n e_{n}x^n = 2e_n x^n \]
and then manipulating to adjust the powers and the indexes which results in the equation %equation \ref{eq:rec} to
\[\frac{1}{x}(n+2)e_{n+2}x^{n+1} -(n+1)e_{n+1}x^n = 2e_n x^n+\frac{1}{x^2}e_{n+2}x^{n+2} -\frac{1}{x}e_{n+1}x^{n+1} \]
Summing it over $n$ and noting 

\begin{align*}
&\frac{1}{x}\sum_{n=0}^{\infty}(n+2)e_{n+2}\cdot x^{n+1}  = \frac{1}{x}(A(x)' -e_1), & &\frac{1}{x^2}\sum_{n=0}^{\infty}e_{n+2}\cdot x^{n+2} = \frac{1}{x^2}(A(x) - e_1x -e_0)\\ 
& \sum_{n = 0}^{\infty} (n+1)\cdot e_{n+1}\cdot x^{n} = A(x)', & &\frac{1}{x}\sum_{n=0}^{\infty} e_{n+1}x^{n+1} = \frac{1}{x}(A(x) - e_0)
\end{align*}
transforms equation (\ref{eq:rec}) to an ODE for the generating function $A(x)$: 
\[ \frac{1}{x}(A(x)' -e_1) - A(x)' = 2A(x) +  \frac{1}{x^2}(A(x) - e_1x -e_0) - \frac{1}{x}(A(x) - e_0) \]
using $e_0 = 0$  the ODE for A(x) reads:
\begin{align*}
\begin{cases}
A(x)'(x - x^2) = A(x)(2x^2 +  1 - x) \\
A(0) = e_0 = 0
\end{cases}
\end{align*}
%With initial condition $A(0) = g_0=0$
This can be solved by separating the variables
\[ (\log A(x))'= \frac{A(x)'}{A(x)}= \frac{-2x^2+x-1}{x^2-x} = (-2x + \log x - 2\log (x-1))'\]
using $A(0) = 0$ the solution is given by
\[ A(x) = \frac{x}{(x-1)^2} e^{-2x}\]

To extract the $e_n$ series recall the following Taylor expansions:
\begin{align*}
\frac{x}{(x-1)^2} = \sum_{n=0}^\infty n\cdot x^n & & e^{-2x} = \sum_{n=0}^\infty \frac{(-2x)^n}{n!}
\end{align*}

plugging the expansions in 

\begin{align*}
A(x) = \sum_{n=0}^\infty e_n\cdot x^n &= \sum_{i = 0}^{\infty} i\cdot x^i \cdot\sum_{j = 0}^\infty \frac{(-2x)^j}{j!} \\
&= \sum_{n=0}^\infty x^n\sum_{i+j = n}i\cdot\frac{(-2)^j}{j!} \\
&= \sum_{n=0}^\infty x^n\sum_{k=0}^n (n-k)\cdot\frac{(-2)^k}{k!}
\end{align*}

We obtain
\[\BE X_n = \sum_{k=0}^n (n-k)\cdot\frac{(-2)^k}{k!} = n\sum_{k=0}^n\frac{(-2)^k}{k!} -2\sum_{k=0}^{n-1} \frac{(-2)^k}{k!} = \frac{n}{e^2}-\frac{2}{e^2} + o(1/n)\]
This implies a non vanishing concentration of monomers 
\[\lim_{n\to\infty}\frac{\BE X_n}{n} =  \frac{1}{e^2} \]
substituting the numerical value ($ e^2 \approx 7.389$ ) gives a concentration of 13.5\%. 

\subsection{Variance} 
Let us write down the equation for the variance of $X_n$. By definition:
\begin{align} \label{eq:var_def}
\var X_n = \BE X^2_n -(\BE X_n)^2 = \BE [ \BE [X^2_n|Y_1] ] -(\BE X_n)^2
\end{align}
using the recurrence relation we have 
\begin{align*}
\BE [ \BE [X^2_n|Y_1] ]  &= \frac{1}{n-1}\sum_{k = 0}^{n-2} \BE(\BE[X_k|Y_1] + \BE [X_{n-2-k}|Y_1])^2 \\
&=\frac{1}{n-1}\sum_{k = 0}^{n-2} \BE[X^2_k] + \BE[X^2_{n-2-k}] + 2\BE[X_k]\BE[X_{n-2-k}]&- \BE^2[X_k] -\BE^2[X_{n-2-k}] \\
& &+ \BE^2[X_k] +\BE^2[X_{n-2-k}]\\
& = \frac{2}{n-1}\sum_{k = 0}^{n-2} \var[X_k] + \frac{1}{n-1}\sum_{k = 0}^{n-2} \BE^2[X_k + X_{n-2-k}]\\
& = \frac{2}{n-1}\sum_{k = 0}^{n-2} \var[X_k] + \frac{1}{n-1}\sum_{k = 0}^{n-2} \BE^2 [X_n]\\
& = \frac{2}{n-1}\sum_{k = 0}^{n-2} \var[X_k] + \BE^2 [X_n]
\end{align*}
where we used the independence of $X_k$, $X_{n-2-k}$ and $Y_1$ in the second line, and the recurrence for expectation in the third line.

Plugging this result into the definition of the variance \ref{eq:var_def}, we obtain an equation identical to the one for the expectation (\ref{eq:E[X_n]}):
\begin{equation*}
\var[X_n] = \frac{2}{n-1}\sum_{k = 0}^{n-2} \var[X_k]
\end{equation*}

Thus, for the variance we also have 
\begin{equation}
\lim_{n\to \infty} \frac{\var X_n}{n} = e^{-2} 
\end{equation}

\subsection{Generating Function}
We conclude this part by a derivation of explicit expressions for the generating functions sums.  We are interested in an equation of the form
\begin{equation*}
X_n = X_k + X_{n-k-2}
\end{equation*}

where $X_k$ and $X_{n-k-2}$ are independent given $k$ as well as independent of $k$ itself. We introduce the generating function $f_n(\lambda) = \BE[e^{\lambda \cdot X_n}]$, and recall that the generating function of a sum of independent variables is a product of the generating functions.
\begin{equation*}
(n-1)f_n(\lambda) = \sum_{k=0}^{n-2} f_k(\lambda)\cdot f_{n-k-2}(\lambda)
\end{equation*}
to make it more transparent, let us define $m = n-2$ and rewrite the the above equation

\begin{equation} \label{eq:conv}
(m+1)f_{m+2}(\lambda) = \sum_{k=0}^{m} f_k(\lambda)\cdot f_{m-k}(\lambda)
\end{equation}
It is now clear that the right hand side is a convolution. This motivates the definition
\begin{equation*}
g(\lambda,t) = \sum_{m=0}^\infty f_m(\lambda)t^m
\end{equation*}
we use equation \ref{eq:conv} to obtain an ODE for $g$
\begin{equation*}
\frac{1}{t}[(m+2)f_{m+2}t^{m+1}] -\frac{1}{t^2}[f_{m+2}(\lambda)t^{m+2}] = t^{m}\sum_{k=0}^{m} f_k(\lambda)\cdot f_{m-k}(\lambda)
\end{equation*}
summing over $m$ gives:
\begin{align*}
\frac{1}{t}[\sum_{m = -1}^\infty (m+2)f_{m+2}(\lambda)\cdot t^{m+1} - f_1(\lambda)] -\frac{1}{t^2}[\sum_{m = -2}^{\infty} f_{m+2}(\lambda)t^{m+2} - f_1(\lambda)\cdot t -f_0(\lambda)] \\
 = \sum_{m = 0}^{\infty} t^{m}\sum_{k=0}^{m} f_k(\lambda)\cdot f_{m-k}(\lambda)
\end{align*}
Rearranging it we obtain
\begin{equation*}
\frac{1}{t}[\dd_t g(\lambda,t) - f_1(\lambda)] -\frac{1}{t^2}[g(\lambda,t) - f_1(\lambda)\cdot t -f_0(\lambda)] = g^2(\lambda,t)
\end{equation*}
% \begin{equation*}
% \frac{1}{t}[\dd_t g(\lambda,t)] -\frac{1}{t^2}[g(\lambda,t) -f_0(\lambda)] = g^2(\lambda,t)
% \end{equation*}
Plugging $f_0(\lambda) = \BE[\exp(\lambda\cdot X_0)] = 1$ and multiplying by $t$ gives:
%\begin{equation*}
%\frac{1}{t}[\dd_t g(\lambda,t)] -\frac{1}{t^2}[g(\lambda,t) -1] = g^2(\lambda,t)
%\end{equation*}

\begin{equation*}
\dd_t g(\lambda,t)- \frac{1}{t}[g(\lambda,t) -1] - t\cdot g^2(\lambda,t) = 0
\end{equation*}

This is a variant of Riccati's equation. Looking for a solution of the form:

\[ g(\lambda,t) = -\frac{y'}{y\cdot t }\]
where $y' = \dd_t y(\lambda, t)$, we obtain the following equation for $y$:
\[- \frac{y''(ty)-y'(y+ty')}{t^2y^2} - \frac{1}{t}[-\frac{y'}{ty}-1] - t\frac{(y')^2}{t^2y^2} = 0 \]
simplifying this we obtain

\[-y''+\frac{2}{t}y'+y = 0 \]
it can be easily verified that this equation has a solution

\[ y = C_1(1-t)e^t +C_2(1+t)e^{-t} \]
plugging this back into the definition of $y$ we obtain for $g$
\begin{equation*}
g(\lambda,t) = \frac{C_1e^t+C_2e^{-t}}{ C_1(1-t)e^t +C_2(1+t)e^{-t}}
\end{equation*}
Importantly $g$ depends only on the ratio $C_1/C_2$. Note that for $t = 0$ we obtain $g(\lambda,0)\equiv 1$ (this is simply the requirement that the equation exist). Differentiating $g$ we obtain 
\begin{equation*}
\dd_t g(\lambda,t) = \frac{C_1e^t-C_2e^{-t}}{ C_1(1-t)e^t +C_2(1+t)e^{-t}} +t\cdot\bigg(\frac{C_1(\lambda)e^t-C_2(\lambda)e^{-t}}{ C_1(1-t)e^t +C_2(1+t)e^{-t}}\bigg)^2
\end{equation*}
Setting $t = 0$ and recalling that $f_1 = e^\lambda $ we obtain
\begin{equation*}
\frac{C_1}{C_2} = \frac{1+e^{\lambda}}{1-e^{\lambda}} = \coth (\lambda/2)
\end{equation*} 
We choose $C_1 = \cosh (\lambda/2) $ and $C_2 = \sinh(\lambda /2)$
\begin{comment}
 we obtain the for $g$

\begin{equation}
g(\lambda,t) = \frac{\cosh(\lambda/2) e^t+\sinh( \lambda/2) e^{-t}}{ \cosh (\lambda/2) (1-t)e^t +\sinh( \lambda/2) (1+t)e^{-t}}
\end{equation}

\end{comment}
It is more convenient to work with $y$ instead of $g$. By definition %and $\tilde{y}$
\begin{equation*}
t\cdot g(t,\lambda) = \sum_{n=0}^\infty f_n(\lambda)t^{n+1} = -\dd_t (\ln y)
\end{equation*}
Therefore 
\[ f_n(\lambda) = -\frac{1}{(n+1)!}\dd_t^{(n+2)}( \ln y) \]

Fa\'a di Bruno's formula gives
\begin{align*}
\frac{d^n}{dx^n}\phi(h(x)) &= \sum \frac{n!}{m_1!m_2! \ldots m_n!}\cdot \phi^{(m_1+\ldots m_n)}\cdot \prod_{j=1}^n\bigg(\frac{h^{(j)}(x)}{j!} \bigg)^{m_j}\\
& = \sum_{k = 1}^n  \phi^{(k)}\cdot B_{n,k}(h', h'',\ldots, h^{(n-k+1)}) 
\end{align*}
where $B_{n,k}$ are the Bell polynomials given by 

\begin{align*}
B_{n,k}(x_1, x_2,\ldots, x_{n-k+1}) & = \sum \frac{n!}{j_1!j_2!\ldots j_{n-k+1}!}\left( \frac{x_1}{1!}\right)^{j_1}\left( \frac{x_2}{2!}\right)^{j_2}\left( \frac{x_{n-k+1}}{(n-k+1)!}\right)^{j_{n-k+1}}\\
\\
j_1+j_2+\ldots j_{n-k+1} & = k\\
j_1+2j_2+\ldots +(n-k+1)j_{n-k+1} & = n
\end{align*}

This explicit expression is unfortunately impractical for large $n$s. 

%http://mathworld.wolfram.com/RiccatiDifferentialEquation.html

%********************************************************************************************************************************
% ********************************** Section III - General setup ******************************************************
%********************************************************************************************************************************
\section{General construction }
In the previous section we considered dimers on finite segment of sites, studying its limit as $n \to \infty$. We can try a different approach, namely, to construct the process directly on the line. The dimer-placement model definition is clearest in the abstract setting of a graph. We therefore present it in a setting slightly more general than intuitive. 
%A natural setting to have in mind is a Cayley graph with a finite set of generators.

Let $G = (V,E)$ be a graph, for simplicity, assume it is of bounded degree $d$. We associate with each edge $e$ an independent random variable $\tau_e$ distributed uniformly on $[0,1]$. We think of $\tau_e$  as a ``wakeup time" for the edge. When the wakeup time occurs, we cover the edge and both the vertices it contains if none of them were previously covered (corresponding to adding the edge to $E(t)$. If either vertex is already covered we do nothing). 

First, let us illustrate this is indeed a generalization of the model we studied in the previous section. To see that note that the underlying graph is a segment of length $n$ and the edges are the natural edges which correspond to the gaps between sites. The fact that $\tau_e$ are i.i.d gives a uniform distribution on the order in which the edges are picked. 

Formally, the process we study defines a family of sets with the following property:

\begin{property} \label{constuction_on_graph} %existance and uniqueness
Given $\{ \tau_e\}_e$ define the family of sets $E(t)$ for $t\in [0,1]$ such that 

\begin{align*}
E(t) &= E(t_{-})\cup  \{ e \;| \text{ $\tau_e = t$ and $e\cap E(t_{-}) = \emptyset$ } \} 
%\\ V(t) &= V(t_{-}) \cup \{ v_1,v_2\in e \text{ if $e$ was added to $E(t)$ } \}
\end{align*} 
where
\begin{align*}
E(t_{-}) = \cup_{s<t}E(s), \: E(0) = \emptyset
%E(t) &= E(t_{-})\cup  \{ e \;| \text{ $\tau_e = t$ and $e\cap E(t_{-}) = \emptyset$ } \} 
%\\ V(t) &= V(t_{-}) \cup \{ v_1,v_2\in e \text{ if $e$ was added to $E(t)$ } \}
\end{align*} 
\end{property}

It is easy to see this a set family with this property, if it exists, generates a maximal independent set of edges at time 1. Namely every edge $e\in G$ is either in the set $E(t)$ for $t> \tau_e$ or it contains a vertex which belongs to an edge already in $E(t)$ at the time $t<\tau_e$. Indeed, if the edge $e$ does not belong to $E(t)$ it will be added to the family at time $\tau_e$, unless one of its vertices belong to an edge previously added to $E(t)$.

Our next goal is to establish the existence of $E(t)$. We will show that for almost all $\{\tau_e \}_e$ the family $E(t)$ exists. Uniqueness follows from the construction. 

Intuitively, the reason a set is ill-defined would be some elements for which it is not clear if they belong to the set or not. To address this let us consider for every edge $e$ the function

\begin{align*}
1_e(t) = 
\begin{cases}
1 \text{ if $e\in E(t)$ at time $t$ }\\
0 \text{ otherwise}
\end{cases}
\end{align*}
and the truncated functions

\begin{align*}
1^{(r)}_e(t) = 
\begin{cases}
1 \text{ if $e\in E_r(t)$ at time $t$ when considering only the edges in $B_r(e)$ }\\
0 \text{ otherwise}
\end{cases}
\end{align*}
where we define $B_r(e)$, the ball of radius $r$ around an edge $e = (u,v)$ to be 
\[ B_r(e) = B_r(u)\cup B_r(v) \]
($B_r(v)$ is the the sub graph of $G$ which is at graph distance at most $r$ from $v$).

Theorem \ref{thm:well_def} shows that a.s. the truncated functions converge to a limit which we identify with $1_e(t)$
\begin{align*}
1^{(r)}_e \stackrel{r\to\infty}{\longrightarrow} 1_e(t)
\end{align*}

The reason $1^{(r)}_e$ could fluctuate as $r$ grows is a ``cascade of tiles" that changes the state of an edge, making it impossible to determine from a finite ball around it whether the edge is present in the cover. If such fluctuations do not subside for arbitrarily large $r$s the event of an edge belonging to $E(t)$ unmeasurable in the graph topology and the process ill defined. 

We start with a definition that captures the intuition of a cascade of tiles:

\begin{definition}
Let $ \gamma = (e_1,e_2,\ldots, e_n)$ where $e_i$ edges of $G$ be a path in $G$. We call $\gamma$ \textit{monotone} if for all $i<j $ and $e_i,e_j\in \gamma$ holds $\tau_{e_i} > \tau_{e_j}$. 
\end{definition}

\begin{remark} Note that there is a natural partial order by inclusion for the set of monotone paths $\gamma_1\leq \gamma_2$ if $\gamma_1\subset \gamma_2$. Unfortunately, a monotone path needs not to be simple, however it is clear from the definition that any monotone $\gamma$ cannot contain the same edge twice.
\end{remark}
For an edge $e$ with time $\tau_e$ it is clear from the definition of our process, that only tiles with times preceding $\tau_e$ can influence the event of laying the tile corresponding to $e$. Therefore only edges lying on monotone path starting at $e$ can influence the event $e\in E(t)$ and thus the functions $1^{(r)}_e(t)$.

The following lemma gives an estimate on the length of a monotone path.

\begin{lemma}
Let $\gamma$ be a path of length $n$.

\[ \pr (\gamma \text{ is monotone}) = \frac{1}{n!} \]
\end{lemma}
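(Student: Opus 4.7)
The plan is to reduce the statement to a symmetry argument on i.i.d.\ uniform random variables. The event $\{\gamma \text{ is monotone}\}$ is precisely the event that the sequence $(\tau_{e_1}, \tau_{e_2}, \ldots, \tau_{e_n})$ is strictly decreasing, so I want to compute the probability of that ordering.

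First I would dispose of the degenerate case. If the edge sequence $(e_1,\ldots,e_n)$ contains a repetition, say $e_i = e_j$ with $i < j$, then monotonicity would force $\tau_{e_i} > \tau_{e_j}$, contradicting $\tau_{e_i} = \tau_{e_j}$; hence the probability is $0$, which is in any case bounded by $1/n!$ (and consistent with the remark that a monotone path cannot repeat an edge, so the formula is only really meaningful in the distinct-edge case). So from here on I assume $e_1,\ldots,e_n$ are distinct edges of $G$.

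Next, because the edges are distinct, the associated random variables $\tau_{e_1}, \ldots, \tau_{e_n}$ are i.i.d.\ uniform on $[0,1]$. Since the common distribution is continuous, with probability $1$ the values are all distinct, so there is a well-defined permutation $\sigma \in S_n$ such that $\tau_{e_{\sigma(1)}} < \tau_{e_{\sigma(2)}} < \cdots < \tau_{e_{\sigma(n)}}$. By exchangeability of i.i.d.\ variables, every $\sigma \in S_n$ is equally likely, i.e.\ each occurs with probability $1/n!$. The path $\gamma$ is monotone precisely when $\sigma$ is the reverse permutation $\sigma(i) = n+1-i$, a single element of $S_n$, which gives $\pr(\gamma \text{ is monotone}) = 1/n!$.

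There is essentially no serious obstacle here; the only subtlety is ensuring independence and continuity are correctly invoked. Independence of the $\tau_e$ is built into the construction, and continuity of the uniform distribution kills ties, so the symmetry argument applies cleanly. The lemma is really just an exchangeability statement dressed up in the language of monotone paths, and I would present it in at most a few lines.
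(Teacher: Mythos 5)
Your proof is correct and uses essentially the same exchangeability argument as the paper: the event is that the i.i.d.\ uniform times $(\tau_{e_1},\ldots,\tau_{e_n})$ appear in one particular order out of the $n!$ equally likely orderings. Your extra care about repeated edges and about ties having probability zero is a reasonable tightening of details the paper leaves implicit, but it does not change the approach.
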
 \label{lem:path}
\begin{proof}
Let $\gamma = (e_1, e_2, \ldots, e_n)$. Consider the event that $\gamma$ is monotone, this is given by the event
\[ A = \{\tau_{e_1}> \tau_{e_2}> \ldots > \tau_{e_n}\} \]

This is a decreasing sequence of i.i.d. random variables. The probability of this event is the same as that of any random permutation 
\[ \pr (\tau_{e_1}> \tau_{e_2}> \ldots > \tau_{e_n} ) = \pr(\tau_{\sigma(e_1)}> \tau_{\sigma (e_2)}> \ldots > \tau_{\sigma(e_n)}) \]
The number of permutations on $n$ elements is $n!$. 
\end{proof}

%Our next goal is to characterise the event that a vertex $v$ is covered by a dimer.
The previous discussion implies the following natural definition of the set of monotone paths having a first edge $e$,
\[ \Gamma_e = \{ \gamma|\; \gamma = (e_1,e_2,\ldots , e_n) \text{ is monotone and }  e_1 = e \} \]

%and further consider 
%\[ \sup \Gamma_v = \{ \gamma\in \Gamma_v | \text{ there is no } \tilde{\gamma}\in \Gamma_v \text{ such that } \tilde{\gamma} \leq \gamma \} \]

We claim that $\Gamma_e$ contains only finite paths a.s. Actually, an even more general statement is true, all monotone paths in a bounded degree graph are finite.

\begin{comment}
%---------------------------------------------------------------------------------------------
\begin{lemma}
All monotone paths are a.s. finite.
\end{lemma}
\begin{proof}
Let $\gamma$ be a path. Assume that $\gamma$ contains a sub-path of length $n$, then by lemma \ref{lem:path} we have 
\begin{align*}
 \pr(\gamma \text{ is monotone}) &\leq \pr(\gamma \text{ contains a monotone sub-path of length } n) =  \frac{1}{n!}
\end{align*}
Therefore, the probability that an infinite path $\gamma$ is monotone, 
\begin{align*}
 \pr(\gamma \text{ is monotone}) &\leq \pr(\gamma \text{ contains a monotone sub-path of length } n \text{ for all }n) =  \frac{1}{n!} \to 0
\end{align*}
%Applying Borrel Cantelli's lemma finishes the proof.
\end{proof}
This in turn implies
%-----------------------------------------------------------------------------------------------
\end{comment}

\begin{lemma} \label{cor:finite_rad}
Almost surely, there exists $r<\infty$ such that $\Gamma_e \subset B_r(e)$.
 \end{lemma}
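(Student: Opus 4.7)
The plan is a first-moment / Borel--Cantelli argument on the length of the longest monotone path through $e$. Let $L = L(e)$ denote this length (set $L = \infty$ if arbitrarily long monotone paths start at $e$). Since consecutive edges of a path share a vertex, every path $(e_1,\ldots,e_n)$ with $e_1 = e$ has all of its endpoints within graph distance $n-1$ of the endpoints of $e$, and so is contained in $B_n(e)$. Therefore $\{L < \infty\}$ implies $\Gamma_e \subset B_L(e)$, and it suffices to prove $L < \infty$ almost surely.

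To control $L$, let $d$ be a bound on the maximum degree of $G$. From any given edge there are at most $2(d-1)$ other edges sharing a vertex with it, so the total number of edge-sequences $(e_1,\ldots,e_n)$ with $e_1 = e$ and consecutive edges adjacent is at most $[2(d-1)]^{n-1}$. Write $N_n$ for the (random) number of such sequences that happen to be monotone. By Lemma \ref{lem:path} each individual sequence of length $n$ is monotone with probability $1/n!$, so
$$\BE[N_n] \leq \frac{[2(d-1)]^{n-1}}{n!}.$$
Summing over $n$, $\sum_{n \geq 1} \BE[N_n] \leq \exp(2(d-1)) < \infty$, so $\sum_n N_n$ is finite almost surely. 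Since each $N_n$ takes values in $\BZ_{\geq 0}$, this forces $N_n = 0$ for all sufficiently large $n$, which is exactly the statement $L < \infty$ almost surely.

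No serious obstacle is expected; the only subtlety worth flagging is that the count $[2(d-1)]^{n-1}$ includes edge-sequences that revisit vertices. This is consistent with the remark following the definition of monotone paths, which notes that monotone paths may revisit vertices but cannot repeat edges, so the overcount is harmless. Note also that the bound $\exp(2(d-1))$ is uniform in $e$, which will presumably be useful later when quantifying the ``truncation radius'' needed to decide $1_e^{(r)}(t)$.
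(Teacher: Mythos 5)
Your proof is correct and takes essentially the same route as the paper: both arguments bound the expected number of monotone edge-sequences of length $n$ starting at $e$ by $(\text{number of such sequences})/n!$ via Lemma \ref{lem:path}, and use the summability of this series (of order $e^{d}$) to conclude that almost surely no long monotone paths start at $e$, hence $\Gamma_e\subset B_r(e)$ for a finite random $r$. The only cosmetic differences are that you run a direct first-moment argument on the path counts $N_n$ (and are a bit more careful, using $[2(d-1)]^{n-1}$ for the adjacency count and explicitly handling non-simple sequences), whereas the paper applies Borel--Cantelli to the events $\Gamma_e \nsubseteq B_r(e)$ with the cruder bound $d^r/r!$.
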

%where we define $B_r(e)$, the ball around an edge $e = (u,v)$ to be $B_r(e) = B_r(u)\cup B_r(v)$. 
\begin{proof} 
\[ \pr( \Gamma_e \nsubseteq B_r(v)) \leq \pr( \text{exists a monotone path $\gamma$ starting with $e$ s.t.  }|\gamma|>r ) \]
Plugging in our assumption of the uniform bound $d$ on the vertex degree, we can bound the number of paths of length $r$ beginning with $e$ by $d^r$ and thus obtain a union bound of this event:
\[ \pr( \Gamma_e \nsubseteq B_r(e)) \leq \frac{d^r}{r!} \] 
Note that the probabilities of the events $\Gamma_e \nsubseteq B_r(v)$ have a finite sum
\[ \sum_r \pr(  \Gamma_e \nsubseteq B_r(e)) < \sum_r \frac{d^r}{r!}  = e^d <\infty \] 
therefore by the Borel Cantelli lemma only finitely many of them happen a.s. Therefore $\Gamma_e\subset B_r(e)$ for some finite random $r$ a.s.
\end{proof}

Armed with this result we can now prove 

\begin{theorem} \label{thm:well_def}
For almost all $\{ \tau_e\}_{e\in G}$ the family $E(t)$ defined by property \ref{constuction_on_graph} exists and is unique.
\end{theorem}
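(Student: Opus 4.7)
The plan is to realize $E(t)$ as the edgewise almost-sure limit of the truncated families $E_r(t)$, using Lemma \ref{cor:finite_rad} to control fluctuations as $r\to\infty$, and then to intersect the resulting probability-$1$ events over the (countable) edge set of $G$.

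Fix an edge $e$ and work on the probability-$1$ event from Lemma \ref{cor:finite_rad} that $\Gamma_e \subset B_{r_0}(e)$ for some finite random $r_0$. The key claim is that $1^{(r)}_e(t) = 1^{(r_0)}_e(t)$ for every $r \geq r_0$ and every $t \in [0,1]$. Since the $\tau_{e'}$ are almost surely distinct and each $B_r(e)$ is finite, the truncated process on $B_r(e)$ can be constructed by induction in increasing $\tau$-order: the inclusion of an edge $e'$ in $E_r(\tau_{e'})$ is determined by the inclusions of those $e''$ incident to $e'$ with $\tau_{e''} < \tau_{e'}$, and each such $(e',e'')$ is the second step of a monotone path starting at $e'$. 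Iterating, the status of $e$ in the $B_r$-truncated process depends only on the $\tau$-values carried by monotone paths emanating from $e$; by hypothesis these all lie in $B_{r_0}(e) \subseteq B_r(e)$, so both truncations assign $e$ the same status.

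With stabilization established, I define $1_e(t) := \lim_{r\to\infty} 1^{(r)}_e(t)$ and $E(t) := \{e : 1_e(t) = 1\}$ on the intersection, still of probability $1$, of the per-edge stabilization events (the edge set is countable since $G$ is of bounded degree and taken with at most countably many vertices, as in the motivating examples). Property \ref{constuction_on_graph} is inherited from the truncations: inside each $B_r(e)$ the inclusion rule is satisfied, and for $r \geq r_0(e)$ the rule coincides with the one in the statement. Uniqueness is immediate from the same induction: any family satisfying Property \ref{constuction_on_graph} must agree with the $B_{r_0(e)}(e)$-truncated process at $e$, hence must coincide with $E(t)$.

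The main obstacle is a careful verification of the stabilization claim, i.e.\ that the status of an edge in the truncated process is determined solely by monotone paths from that edge lying inside the truncation. This amounts to a clean induction on $\tau$-values combined with precise bookkeeping about how influence can propagate near the boundary of $B_r(e)$; once this is set up, the rest of the argument (countable intersection, transfer of the property to the limit, and uniqueness) is routine.
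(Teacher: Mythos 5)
Your proposal is correct and follows essentially the same route as the paper: invoke Lemma \ref{cor:finite_rad} to get a finite random radius $r_0$ with $\Gamma_e\subset B_{r_0}(e)$, deduce that the truncated indicators $1^{(r)}_e(t)$ stabilize for $r\geq r_0$, and define $E(t)$ as the resulting limit, with uniqueness built into the construction. You merely spell out more carefully the points the paper leaves implicit, namely the induction in increasing $\tau$-order showing that only monotone paths from $e$ can influence its status, the countable intersection over edges, and the verification that the limit satisfies Property \ref{constuction_on_graph}.
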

\begin{proof}
It is enough to show that for any edge $e\in E$ the functions $1^{(r)}_e(t)$ converge a.s. This will in particular settle the question of the existence of $E(t)$ by giving an explicit construction of it. 

By lemma \ref{cor:finite_rad} for each edge $e$ almost surely there exists a finite radius $r_0$ such that $\Gamma \subset B_{r_0}(e)$. Thus for any $r_1,r_2>r_0$ holds

\[ 1^{(r_1)}_e(t) = 1^{(r_2)}_e(t) \]
This proves the convergence of the sequence $\{ 1^{(r)}_e(t)\}_r$ establishing the uniqueness. % A different way to see this is that since the ball $B_{r_0}(e)$ is finite, this event is measurable in the graph topology.

\end{proof}
 \section{ Monomer concentration results }\label{sec:monomer}

This section is dedicated to establishing results regarding the limiting behaviour of monomers. We begin with generic results, which easily generalize to bounded degree graphs, and then continue with results and proofs which are special to $\Z^d$. Since the generalization to bounded degree graphs is fairly clear when possible, we see no reason to complicate the notation for the sake of seemingly added generality therefore we work throughout with the graph of $\Z^d$. For the sake of clarity, we separate the $\Z^d$ specific results from the more generic ones.

Consider a box of side-length $n$ which we denote by $\Lambda_n$.  Denote by 
\begin{align}
S_n = \frac{1}{|\Lambda_n|}\sum_{v\in\Lambda_n} (X_v -\BE X_v)
\end{align}
We evaluate the probability 
\[ \pr(|S_n| >\epsilon) \]
Let us define a truncated event $Y_v^{r}$ to be the event $v$ is not covered by a dimer when considering only dimers in a ball or radius $r$ around $v$. 
Using the truncated variables we can bound this probability:

\begin{align*}
\pr(|S_n| >\epsilon) & \leq   \pr(\left| \sum_{v\in\Lambda_n} Y^r_v - |\Lambda_n|\cdot\BE Y^r_v \right| > \frac{\epsilon}{3} \cdot |\Lambda_n|) + \\
& \pr (\sum_{\Lambda_n} |X_v -Y^r_v| > \frac{\epsilon}{3} \cdot |\Lambda_n|) + \pr(|\sum_{v\in\Lambda_n} |\BE Y^r_v -\BE X_v| > \frac{\epsilon}{3} \cdot |\Lambda_n|)
\end{align*}

From the proof of lemma \ref{cor:finite_rad} we have
\[ \pr(X_v\neq Y_v^{r}) \leq \frac{d^r}{r!} \]
Thus,
\begin{align*}
\BE (|\sum_{\Lambda_n} X_v - Y_v^{(r)}|) &\leq \BE (\sum_{\Lambda_n} |X_v - Y_v^{(r)}|)\\
& =  \sum_{\Lambda_n} \BE(|X_v - Y_v^{(r)}|) \leq |\Lambda_n|\cdot \frac{d^r}{r!}
\end{align*}
This bounds the last summand, in particular by

\begin{align*}
\pr (|\sum_{v\in\Lambda_n} |\BE Y^r_v -\BE X_v| > \epsilon \cdot |\Lambda_n|) \leq 1_{\frac{d^r}{r!}\leq \frac{\epsilon}{3} }
\end{align*}
By Markov's inequality we obtain
\begin{align*}
\pr (\sum_{\Lambda_n} |X_v -Y^r_v| > \frac{\epsilon}{3} \cdot |\Lambda_n|) \leq \frac{3}{\epsilon} \cdot \frac{d^r}{r!}
\end{align*}
which bounds the second summand. Note that these bounds are uniform in $n$ and tend to 0 as $r\to\infty$. 

For the first summand we note that 
\begin{align*}
\sum_{\Lambda_n} Y^r_v - |\Lambda_n|\cdot\BE Y^r_v = \sum_{i\in (0,r)^d}\sum_{\substack{v\in\Lambda_n \\ v\equiv i\mod(r)}} Y^r_v - |\Lambda_n|\cdot\BE Y^r_v
\end{align*}
The inner sum is a sum of i.i.d. Bernoulli random variables therefore by the Chernoff bound 
\begin{align*}
\pr(\sum_{\substack{v\in\Lambda_n \\ v\equiv i\mod(r)}} Y^r_v - |\Lambda_{\frac{n}{r}}|\cdot\BE Y^r_v > \frac{\epsilon}{3}\cdot |\Lambda_{\frac{n}{r}}|) \leq e^{- (\epsilon /3)^2 \cdot |\Lambda_{\frac{n}{r}}|}
\end{align*}
taking a union bound we obtain 
\begin{align*}
\pr (\sum_{\Lambda_n} |X_v -Y^r_v| >\frac{\epsilon}{3}\cdot |\Lambda_n|)\leq r^d \cdot e^{-(\epsilon/3)^2\cdot |\Lambda_{\frac{n}{r}}|}
\end{align*}
summing the estimates we obtain
\begin{align*}
\pr (|S_n|>\epsilon)\leq r^d \cdot e^{-(\epsilon/3)^2\cdot |\Lambda_{\frac{n}{r}}|} + \frac{3}{\epsilon}\cdot \frac{d^r}{r!} + 1_{\frac{d^r}{r!}\leq \frac{\epsilon}{3}} 
\end{align*}

Taking $r=\log n$ for $n$ large enough (so as $ 1_{\frac{d^r}{r!}\leq \frac{\epsilon}{3}} = 0$) we use Stirling's formula to obtain

\begin{align*}
\pr (|S_n|>\epsilon)\leq \log^d n \cdot e^{-\epsilon^2\cdot n^{d-1}} + (\frac{d}{\log n})^{\log n}  
\end{align*}

\subsection{a CLT result for $\Z^d$}
First let us show how for the $\Z^d$ case the stationarity of our process implies a positive concentration of monomers. This is a direct consequence of Templeman's ergodic theorem for $\Z^d$ actions 
%(see \cite[chapter 6, theorem 2.8]{krengel1985ergodic}, or
(see \cite[chapter 2, theorem 2.6]{sarig2009lecture}) which we cite here for completeness: 
\begin{theorem}[Tempelman 1975]\label{thm:Tempelman}
Let $T_1,\ldots,T_d$ be d-commuting probability preserving maps on a probability space, and suppose $\{I_r\}_{r\geq 1}$ is an increasing sequence of boxes which tends to $\Z^d_+$. If $f\in L^1$, then 
\begin{align}
\frac{1}{|I_r|} \sum_{v \in I_r} f\circ T^{v} \stackrel{r\to\infty}{\longrightarrow}\BE(f|Inv(T_1)\cap\ldots\cap Inv(T_d)) \; a.s.
\end{align}

where we sum over $v$ as vectors in $\Z^d$ namely $T^v = T_1^{v_1}\circ\ldots\circ T_d^{v_d}$ where $v_i$ is the i-th coordinate of $v$. $Inv(T)$ is the space of invariant functions under $T$ and $\BE(f|Inv(...))$ is the conditional expectation.
\end{theorem}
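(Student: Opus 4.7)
The plan is to prove Tempelman's pointwise ergodic theorem for $\Z^d$ actions by the standard three-step strategy: a maximal ergodic inequality, almost-sure convergence on a dense subclass, and a Banach-principle argument combining the two.

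First I would establish the maximal ergodic inequality
\[
\pr\Bigl(\sup_r \frac{1}{|I_r|}\sum_{v\in I_r}|f\circ T^v| > \lambda\Bigr) \leq \frac{C_d}{\lambda}\|f\|_1,
\]
with $C_d$ depending only on the dimension. This rests on a Vitali-type covering lemma: from any finite family of translated boxes $\{I_{r_j}(x_j)\}$ one can greedily extract a pairwise-disjoint subfamily (ordered by decreasing radius) whose total volume is at least $3^{-d}$ of the union. Combined with Calder\'on's transfer principle, this gives the weak-$(1,1)$ bound for the maximal operator $Mf$.

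Second, I would prove a.s.\ convergence on a dense subclass of $L^2$. Decompose $L^2 = \CI \oplus \overline{\CC}$, where $\CI$ is the space of functions jointly invariant under $T_1,\ldots,T_d$ and $\CC$ is the linear span of coboundaries $g - g\circ T_i$ for $g\in L^\infty$ and $i\in\{1,\ldots,d\}$. On $\CI$ every average equals the function itself, which coincides with $\BE(f\mid \CI)$. On a coboundary $g - g\circ T_i$ the sum over $v\in I_r$ telescopes in the $i$-th coordinate, so the $L^2$ norm of the average is $O(|\dd_i I_r|/|I_r|) \to 0$ by the F\o lner property of boxes; the maximal inequality then upgrades $L^2$ convergence to a.s.\ convergence along the full sequence (via Borel--Cantelli along a sparse subsequence, controlling oscillations between consecutive terms by $M$).

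Third, I would extend to all $f \in L^1$ via the Banach principle. Split $f = g + h$ with $g$ in the dense subclass and $\|h\|_1 < \epsilon$. The lim-sup minus lim-inf of the box averages of $f$ is pointwise bounded by $2Mh$, which by the maximal inequality has measure at most $2C_d\epsilon/\lambda$ above any level $\lambda$. Sending $\epsilon \to 0$ yields a.s.\ convergence of the box averages of $f$. The limit is invariant under each $T_i$, because the single-step translate $T_i\cdot I_r$ differs from $I_r$ only on a boundary of size $o(|I_r|)$; combined with $L^1$-convergence and uniqueness of the conditional expectation, the limit must be $\BE(f\mid \CI)$.

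The principal obstacle is the covering lemma underlying the maximal inequality. For axis-aligned boxes in $\Z^d$ the greedy Vitali argument works cleanly, but Tempelman's general theorem covers amenable-group averaging sequences satisfying a Besicovitch-type regularity condition, where the covering step is considerably more delicate. For the nested boxes relevant here the estimates reduce essentially to an iterated application of the one-dimensional Birkhoff argument coordinate by coordinate.
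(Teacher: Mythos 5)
This theorem is not proved in the paper at all: it is quoted verbatim (from Sarig's lecture notes) as a known result of Tempelman, so there is no internal proof to compare yours against. Judged on its own, your outline is the standard and correct route to the pointwise ergodic theorem for box averages of a $\Z_+^d$ action: weak-$(1,1)$ maximal inequality via a covering argument plus transfer, a.s.\ convergence on the dense subspace $\mathcal{I}\oplus\{\text{coboundaries}\}$ of $L^2$, and the Banach principle to pass to $L^1$, with the limit identified as $\BE(f\mid \mathcal{I})$ by invariance and $L^1$-convergence. Two small remarks. First, the $3^{-d}$ greedy Vitali step is legitimate here precisely because the boxes $I_r$ are \emph{nested} (equivalently satisfy Tempelman's regularity $|I_r - I_r + I_r|\leq C_d\,|I_r|$); for arbitrary eccentric rectangles a Besicovitch-type covering fails and the strong maximal operator is not weak-$(1,1)$, so you should state explicitly that the selection compares a discarded translate $x_j+I_{r_j}$ with an earlier-selected $x_k+I_{r_k}$, $r_k\geq r_j$, and uses $I_{r_j}\subset I_{r_k}$ --- you gesture at this in your last paragraph, and it is the one place where the argument could silently go wrong. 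Second, for a coboundary $g-g\circ T_i$ with $g\in L^\infty$ the box average telescopes along lines in direction $i$ and is bounded pointwise by $2\|g\|_\infty\,|I_r|^{-1}|\partial_i I_r|\to 0$, so convergence on the dense class is already uniform and the Borel--Cantelli/sparse-subsequence upgrade you invoke is unnecessary; the maximal inequality is only needed for the final Banach-principle step.
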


Using this we obtain
\begin{align*}
S_n \stackrel{n\to\infty}{\longrightarrow} 0 
\end{align*}

or in other words, 
\begin{align*}
\frac{1}{|\Lambda_n|} \sum_{v\in \Lambda_n} X_v \stackrel{n\to\infty}{\longrightarrow} \BE X_0 = \pr(X_0 = 1)
\end{align*}

Our goal is to establish a CLT result for $S_n$ namely we would like to show that 

\begin{align*}
\pr (\sqrt{|\Lambda_n|}S_n) \sim N(0, \sigma). 
\end{align*}
Consider $X_u$ and $X_v$, and denote the distance beween them by $r = ||u-v||$. For these variables there is a mixing property by lemma \ref{cor:finite_rad}, namely 

\begin{align*}
|\pr (X_u\cap X_v) - \pr(X_u)\pr(X_v)| \leq  \frac{d^r}{r!}
\end{align*}
(this follows from noticing that if $u\notin \max \Gamma_v $ makes $X_u,X_v$ independent, and plugging the estimate on the radius of $\max \Gamma_v$.)

A central limit theorem for our process follows from the theorem of Bolthausen \cite{bolthausen1982central} which we cite for completeness:
Let $X_\rho$ be a real valued stationary random field with $\rho\in \Z^d$, i.e. the $X_\rho$ are real random variables and the joint laws are shift invariant.

If $\Lambda\subset\Z^d$ let $\CA_{\Lambda}$ be the $\sigma$-algebra generated by $X_\rho$, $\rho\in\Lambda$. If $\Lambda_1,\Lambda_2\subset \Z^d$, let $d(\Lambda_1,\Lambda_2) = \inf\{ d(\rho_1,\rho_2): \rho_1\in \Lambda_1,\rho_2\in\Lambda_2 \}$. The mixing coefficients we use are defined as follows, if $n\in\BN$,$k,l\in\BN\cup\{ \infty\}$
\begin{align*}
\alpha_{k,l}(n) &= \sup \{|\pr (A_1\cap A_2) - \pr(A_1)\pr(A_2)| : A_i\in \CA_{\Lambda_i},\: |\Lambda_1|\leq k, |\Lambda_2|\leq l, d(\Lambda_1,\Lambda_2)\geq n \}\\
\rho(n) &= \sup \{ |\Cov(Y_1,Y_2)| : Y_i\in L_2(\CA_{\{ \rho_i \} }),\: \Vert Y_i\Vert_2\leq 1,\: d(\rho_1,\rho_2)\geq n \}
\end{align*}
\begin{theorem}
If $\sum_{m=1}^\infty m^{d-1}\alpha_{k,l}(m) <\infty$ for $k+l \leq 4$, $\alpha_{1,\infty}(m) = o(m^{-d})$ and if 
\begin{align*}
\sum_{m=1}^\infty m^{d-1}\rho(m) < \infty
\end{align*}
or 
\begin{align*}
\text{for some } \delta >0 \; \Vert X_{\rho}\Vert_{2+\delta}<\infty \text{ and } \sum_{m=1}^\infty m^{d-1}\alpha_{1,1}(m)^{\delta/(2+\delta)} < \infty
\end{align*}
Then $\sum_{\rho\in\Z^d} | \Cov (X_0, X_{\rho})|<\infty $ and if $\sigma^2 = \sum_{\rho} \Cov(X_0,X_\rho)>0$, then the laws of $ |\Lambda_n|^{1/2} \cdot S_n/\sigma $ converge to the standard normal one.
\end{theorem}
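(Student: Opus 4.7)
The plan is to prove this via Stein's method for stationary random fields. The CLT follows once I establish the asymptotic Stein identity $\BE[f'(W_n) - W_n f(W_n)] \to 0$ for every $f \in C_b^2(\BR)$, where
\begin{align*}
W_n := \sigma^{-1}|\Lambda_n|^{-1/2}\sum_{\rho \in \Lambda_n}(X_\rho - \BE X_\rho).
\end{align*}
Two ingredients are needed: first, the summability $\sum_{\rho \in \Z^d}|\Cov(X_0,X_\rho)| < \infty$, so that $\sigma^2$ is well-defined and finite; second, a local-decoupling argument exploiting the mixing coefficients to verify the Stein identity.

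For summability, under the hypothesis $\sum_m m^{d-1}\rho(m) < \infty$ I combine $|\Cov(X_0, X_\rho)| \leq \rho(|\rho|)\|X_0\|_2^2$ with the lattice point count $|\{\tau \in \Z^d : |\tau|=m\}| = O(m^{d-1})$. Under the $(2+\delta)$-moment hypothesis I apply the Ibragimov--Rosenblatt covariance inequality
\begin{equation*}
|\Cov(Y,Z)| \leq 8\, \alpha(\CA_Y,\CA_Z)^{\delta/(2+\delta)} \|Y\|_{2+\delta}\|Z\|_{2+\delta},
\end{equation*}
which yields $|\Cov(X_0,X_\rho)| \leq 8\|X_0\|_{2+\delta}^2 \alpha_{1,1}(|\rho|)^{\delta/(2+\delta)}$; in both cases the double sum is finite. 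For the Stein step, fix $f\in C_b^2$ and a scale $m = m(n) \to \infty$, set $\bar X_\rho := X_\rho - \BE X_\rho$, and for each $\rho \in \Lambda_n$ put
\begin{align*}
Y_\rho := \sigma^{-1}|\Lambda_n|^{-1/2}\!\!\sum_{\tau\in\Lambda_n,\,|\tau-\rho|\leq m}\!\!\bar X_\tau, \qquad Z_\rho := W_n - Y_\rho.
\end{align*}
Taylor-expanding $f(W_n) = f(Z_\rho) + Y_\rho f'(Z_\rho) + R_\rho$ with $|R_\rho| \leq \tfrac12 \|f''\|_\infty Y_\rho^2$ and summing,
\begin{align*}
\BE[W_n f(W_n)] = \sigma^{-1}|\Lambda_n|^{-1/2}\sum_{\rho \in \Lambda_n}\bigl\{\BE[\bar X_\rho f(Z_\rho)] + \BE[\bar X_\rho Y_\rho f'(Z_\rho)] + \BE[\bar X_\rho R_\rho]\bigr\}.
\end{align*}

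The first inner expectation is a covariance between $\bar X_\rho$ and a function of the field on the complement of the $m$-ball around $\rho$, hence $O(\|f\|_\infty \alpha_{1,\infty}(m))$, contributing $O(|\Lambda_n|^{1/2}\alpha_{1,\infty}(m))$ in total. The middle term, after a further mixing step that replaces $f'(Z_\rho)$ first by $\BE f'(Z_\rho)$ and then by $\BE f'(W_n)$, leaves the coefficient $\sigma^{-2}|\Lambda_n|^{-1}\sum_\rho\sum_{|\tau-\rho|\leq m}\Cov(X_\rho,X_\tau)$, which converges to $\sigma^{-2}\sum_\tau \Cov(X_0,X_\tau) = 1$ by stationarity and the summability from the previous step (boundary effects from $\Lambda_n$ being negligible). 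The Taylor remainder is bounded via Hölder and a second-moment estimate on $Y_\rho$ built from the covariance sum, giving an error of order $m^d/|\Lambda_n|^{1/2}$. The main obstacle is the simultaneous choice of $m = m(n)$: one needs $m^d = o(|\Lambda_n|^{1/2})$ to kill the Taylor remainder and $|\Lambda_n|^{1/2}\alpha_{1,\infty}(m) \to 0$ to kill the decoupling term. The hypothesis $\alpha_{1,\infty}(m) = o(m^{-d})$ is precisely what makes both feasible simultaneously (for instance $m = \lfloor n^{\gamma}\rfloor$ with a suitable $\gamma \in (1/2,1)$), and this is where the strength of the mixing assumption is genuinely used. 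Combined with tightness (from $\BE W_n^2 \to 1$) and Stein's lemma, the vanishing of $\BE[f'(W_n) - W_n f(W_n)]$ on $C_b^2$ delivers $W_n \Rightarrow N(0,1)$.
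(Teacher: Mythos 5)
First, a framing point: this statement is Bolthausen's 1982 theorem, which the paper quotes verbatim from \cite{bolthausen1982central} and uses as a black box; the paper contains no proof of it. What you have written is an attempt to reprove Bolthausen's theorem itself, and your chosen route --- Stein's method with a local/distant decomposition $W_n = Y_\rho + Z_\rho$ --- is in fact the method of Bolthausen's original paper (he uses the characteristic-function form $\BE[(i\lambda - W_n)e^{i\lambda W_n}]$ rather than $\BE[f'(W_n) - W_nf(W_n)]$, but the architecture is the same). Your treatment of the covariance summability and of the first decoupling term is fine in spirit (modulo the fact that for unbounded fields the bound $O(\Vert f\Vert_\infty\,\alpha_{1,\infty}(m))$ on $\BE[\bar X_\rho f(Z_\rho)]$ needs a truncation or a moment-adapted covariance inequality; for the paper's application the $X_v$ are indicators, so this is harmless).

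There are, however, two genuine gaps. (i) Your handling of the middle term is not justified by the stated hypotheses. Replacing $f'(Z_\rho)$ by $\BE f'(Z_\rho)$ inside $\BE[\bar X_\rho Y_\rho f'(Z_\rho)]$ "by a further mixing step'' would require decoupling the random variable $\bar X_\rho Y_\rho$ --- which is measurable with respect to the field on a ball of $\asymp m^d$ sites with $m\to\infty$ --- from a function of the exterior field; the relevant coefficient is $\alpha_{m^d,\infty}(0)$, which is controlled by nothing in the hypotheses (only $\alpha_{k,l}$ with $k+l\le 4$ and $\alpha_{1,\infty}$ are assumed). This is precisely the step where Bolthausen instead proves $\sigma^{-2}|\Lambda_n|^{-1}\sum_\rho \bar X_\rho \bar S_\rho \to 1$ in $L^2$ via a variance (fourth-moment) computation, and that computation is the \emph{only} place the conditions $\sum_m m^{d-1}\alpha_{k,l}(m)<\infty$ for $k+l\le 4$ enter. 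Your sketch never uses those conditions, which is a reliable sign that a necessary estimate is missing. (ii) Your scale choice is inconsistent: with $|\Lambda_n|=n^d$ and $m=\lfloor n^\gamma\rfloor$, $\gamma\in(1/2,1)$, the Taylor remainder $m^d/|\Lambda_n|^{1/2}=n^{d(\gamma-1/2)}$ \emph{diverges}. One needs $m^d=o(|\Lambda_n|^{1/2})$, i.e.\ $m=o(n^{1/2})$, and simultaneously $|\Lambda_n|^{1/2}\alpha_{1,\infty}(m)\to 0$; these are compatible only because $\alpha_{1,\infty}(m)=o(m^{-d})$ is a strict little-$o$, and the correct choice is $m$ just below $n^{1/2}$, produced by a diagonal argument, not a fixed power $\gamma>1/2$. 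Both points are repairable --- they are repaired in Bolthausen's paper --- but as written the proposal does not close.
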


The finiteness of the sums involving the mixing coefficients is an immediate consequence of lemma \ref{cor:finite_rad}. It asserts that the sums contains only a finite number of summands.

The only non trivial matter is checking $\sigma^2 = \sum_{\rho} \Cov(X_0,X_\rho)>0$. The idea is seen most clearly for $\Z^2$, therefore we present it here for the plane grid, however it generalizes for arbitrary dimension $d$.

We call a ``cage" the event when for a rectangle frame enclosing a $4 \times 1$ segment is filled with tiles before the inside is being filled. See figure \ref{fig:cage} for illustration.

\begin{figure}
\includegraphics[height=8cm]{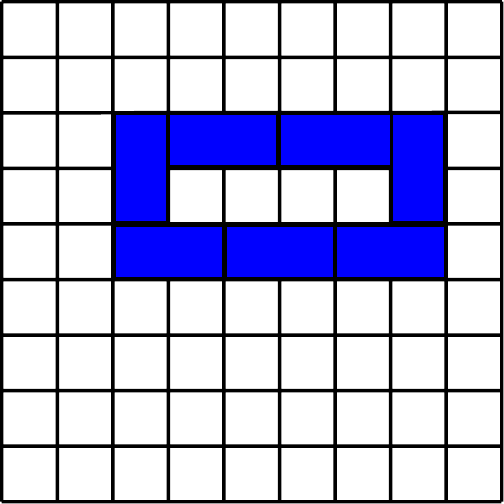}
\caption{a cage \label{fig:cage}}
\end{figure}

Our next observation is that the sites within the cage are covered independently of the sites outside the cage. Using Templeman's theorem \ref{thm:Tempelman} we can see there is a positive density of cages. Using this we can bound $\sigma$ from 0:
\begin{align*}
\var[|\Lambda_n|\cdot S_n] & \geq \BE [ \var[\sum_{v\in \Lambda_n} x_v|\tau] ] \\
& = \BE [ \var [ \sum_{v \text{ in cage}} x_v+ \sum_{v \text{ out of cage}} x_v | \tau ] ]\\
& \geq \BE [ \var [ \sum_{v \text{ in cage}} x_v | \tau ] ]\\
& \geq c \cdot \BE [\text{ \# of cages } ]
\end{align*}
where $\tau$ is the the $\sigma$-field created by conditioning the positions of cages in a box of size $n$. Note that by Tempelman's theorem, we know there is a positive density of cages. This bounds the sum of covariances from 0, allowing us to apply Bolthousen's theorem.

\section{Questions and discussion}

There is  another setting which should be solvable using a recurrence equation approach, this is a $d$-regular tree, which can be obtained as a limit of trees chopped at depth $n$. The recurrence in this case is more complicated than the recurrence we obtained for $\Z$, but the independence of the different parts of the graph holds in this case as well, which was the key element in the approach. 

This gives rise to a couple of questions:
%\begin{question}
%In the 1 dimensional case we calculated directly the first moment for finite segments and an unusable explicit formula for generating functions. Is there an explicit form for the generating function?
%\end{question}
The CLT result for our model followed from Bolthousen's theorem which uses the $\Z^d$ structure, and moreover in bounding the variance from 0 we used Templeman's theorem which heavily uses the $\Z^d$ structure. However, the essential ingredient for a CLT type of theorem is mixing or near independence, which we obtain from lemma \ref{cor:finite_rad} in our model. 

\begin{question}
 It would be interesting to see in what generality the CLT result is valid? In particular for Cayley graphs of what groups can such a result be obtained? 
\end{question}

One cannot expect to be able to establish these results for a general Cayley graph using the tools of ergodic theory. To apply ergodic theorems ameanability is obviously needed, however it is not clear such a result cannot be established by other means. 

The tail estimates we show in section \ref{sec:monomer} lead us to conjecture that the result should be valid at least for uniformly bounded degree graphs.

\author{Jacob J. Kagan} \\

\email{jacov.kagan@gmail.com }

\end{document}